\documentclass[a4paper,10pt]{amsart}
\pdfoutput=1
\usepackage[latin1]{inputenc}
\usepackage{amsmath,amssymb}
\usepackage{latexsym}
\usepackage[english]{babel}

\newcommand {\NN}  {{\mathbb N}}
\newcommand {\ZZ}  {{\mathbb Z}}
\newcommand{\set}[1]{\left\{#1\right\}}
\newcommand{\comment}[1]{}

\usepackage[paper=a4paper,left=30mm,right=20mm,top=25mm,bottom=30mm]{geometry}

\textwidth152mm 
\textheight235mm
\oddsidemargin0mm

\setcounter{secnumdepth}{3}
\setcounter{tocdepth}{3}

\usepackage{amsthm}

\newcommand {\CC}  {{\mathbb C}}

\newtheorem{thm}{Theorem}

\newtheorem{cor}[thm]{Corollary}

\newtheorem{lem}[thm]{Lemma}

\newtheorem{prp}[thm]{Proposition}
\newtheorem{proposition}[thm]{Proposition}
\newtheorem{rem}[thm]{Remark}
\newtheorem{ex}[thm]{Example}

\newtheorem*{theorem*}{Theorem}
\newtheorem*{proposition*}{Proposition}
\newtheorem*{lemma*}{Lemma}
\newtheorem*{corollary*}{Corollary}
\newtheorem*{remark*}{Remark}
\newtheorem*{example*}{Example}

\title[Penney's algorithm]{A remark on Penney's algorithm}

\author[H.~Brunotte]{Horst~Brunotte}
\address{Haus-Endt-Stra{\ss}e 88 \\ D-40593 D\"usseldorf, GERMANY}
\email{brunoth@web.de}
\date{\today}

\keywords{integer representation,  canonical  representation, radix representation}

\subjclass[2020]{11A63, 11A67, 11C08, 11R11}

\begin{document}

\begin{abstract}
Based on the well-known algorithm of W. Penney \cite{penney} we determine the set of lengths of the canonical representation
of   integers with respect to the trinomial $X^{2 m} + 2X^m + 2$. 
\end{abstract}

 \maketitle
 
W.~Penney~\cite{penney} used $-1 +i$ as a basis of representing complex numbers\footnote{$\NN$ is the set of positive  rational integers and $\NN_0= \NN \cup \set{0}$.}  
 $$a+ b i       \qquad \qquad (a,b \in \{ k /  2^n  \;: \; k\in \ZZ, \; n\in \NN_0\})$$
by writing $a, b$ as
 $$ \sum_{j=0}^k c_j \bigl(  (-1 +i)^4 \bigr)^j \qquad \qquad (c_0\ldots, c_k \in \{0,1,2,3\}).$$
 Based on his algorithm we here consider the so-called canonical representation of integers with respect to the minimal 
 polynomial of $-1 +i$ and straightforwardly extend this method  to particular integral   trinomials of even degrees.
 
 Canonical number systems are  natural generalizations
of radix  representations of ordinary integers 
to algebraic integers. Given a monic integer polynomial $p$ with $|p(0)| >1$ we say that $z\in \ZZ$ admits a $p$-canonical representation if there exist  
a positive integer ${\rm \ell} $ and
  $$u_0,\ldots, u_{{\rm \ell} -1} \in D_p:= \set{0,\ldots, |p(0)| -1} \subset \NN_0$$ such that 
$$z \equiv \sum_{j=0}^{ {\rm \ell}-1  }u_j X^j \pmod p\,,$$
and we denote by $Z_p$ the set of $p$-canonically representable integers. It is well-known that this representation (if it exists) is unique (e.g., see \cite{kirthu}).
If ${\rm \ell}$ is minimal with the property above then  $${\rm \ell}_p (z):= {\rm \ell}$$ is called the length of the \mbox{$p$-canonical} representation of $z$,
and we shortly write
$$ z = (u_ { {\rm \ell}-1}  \cdots   u_ {0})_p\,.$$
For a given integer $c$ with  
\begin{equation}\label{cpass}
 c >  | p(0)|  \qquad \text{ and } \qquad \  | p(0)|, \ldots, c  - 1 \in  Z_p
\end{equation}
we define the function 
$$\lambda_{p , - c} :  \ZZ  \rightarrow   \NN$$ 
as follows. For $z \in \ZZ$ we determine  its $- c$-representation 
$$z=( v_k \cdots v_0)_{-c} \qquad \qquad  (v_0,\ldots, v_k \in \{0, 1, \ldots, c - 1\})$$ and set
$$ \lambda_{p, - c} (z) : = {\rm \ell}_p  (v_k)\,.$$
Note that the right hand side is well-defined because we actually have  $$\{0, 1, \ldots, c - 1\}  \subseteq Z_p\,.$$
Using this function we can state our main result.

\begin{thm}\label{ppen}
Let $$p := X^2 + 2X + 2\,.$$
\begin{enumerate}
\item  
${\rm \ell}_p (z) = 4 \bigl({\rm \ell}_{-4}(z)  -1\bigr) + \lambda_{p, -4} (z)  \qquad  \qquad (z\in   \ZZ)$
\item ${\rm \ell}_p (z) \equiv 0, 1 \pmod 4 \qquad  \qquad (z\in \ZZ)$\\
and we have $${\rm \ell}_p ( \ZZ) =\set{a(n)  \;  :  \; n \in \NN},$$
where the sequence  $a(n)_{n\in \NN}$ specifies the non-negative integers congruent to $0$ or $1$ modulo~$4$ and  is given by 
$$ a(n)= a(n-1) + (-1)^n +2\qquad \qquad (n \in \NN)$$
with $a(0)=0$ (see \cite[A042948]{sloeis}).\\
\item For $n, m\in \NN$ we have $$ {\rm \ell}_p  (n) \neq  {\rm \ell}_p  (-m)   \,.$$
\item Fix  $ {\rm \ell} \in \NN$. If $\rm \ell$ is odd and $n \in \NN$ maximal with the property
$$ {\rm \ell}_{-4}  ( n) =  {\rm \ell}$$
then we have 
$$\lambda_{p, -4} (n)= 4,   \quad \lambda_{p, -4} (n+1)= 1,   \quad   {\rm \ell}_p  (n) \equiv 0 \pmod 4 $$ and $$ {\rm \ell}_p  (n+1) =   {\rm \ell}_p  (n) + 5\,.$$
If $\rm \ell$ is even and  $n \in \NN$ least with the property
$$ {\rm \ell}_{-4}  (- n) =  {\rm \ell}$$
then we analogously have 
$$\lambda_{p, -4} (-n)= 4,   \quad \lambda_{p, -4} (-(n+1))= 1,   \quad   {\rm \ell}_p  (-n) \equiv 0 \pmod 4 $$
 and $$ {\rm \ell}_p  (-(n+1)) =   {\rm \ell}_p  (- n) + 5\,.$$
\item The subsequence of ${\rm \ell}_p ( \ZZ) $ which describes the lengths of the $p$-representations of the nonnegative (negative, respectively) integers is given by the sequences of consecutive pairs
$$\bigl(a(4n-3), \, a(4n-2)\big)_{n\in \NN} $$
and
$$\bigl(a(4n-1),  \, a(4n)\big)_{n\in \NN}  \text{, respectively. }$$
\item If  $n < m$ are  positive  integers then we have
$${\rm \ell}_p (m) = {\rm \ell}_p (n) \qquad \text{ or  } \qquad {\rm \ell}_p (m) \ge  {\rm \ell}_p (n) + 3$$
and
$${\rm \ell}_p (-m) = {\rm \ell}_p (-n) \qquad \text{ or  } \qquad {\rm \ell}_p (-m) \ge  {\rm \ell}_p (-n) + 3\,.$$
\item  $ -2 \le \lambda_{p, -4} (x) + \lambda_{p, -4} (y) - \lambda_{p, -4} (x y) \le 7 \qquad \qquad (x,y \in \ZZ),$ \\
and in both cases equality is possible.
\item For   $x, y \in \ZZ$  we have
$${\rm \ell}_p (x+y)  \le   {\rm \ell}_p (x) +  {\rm \ell}_p (y) + 2 $$
and
$${\rm \ell}_p (x y) \le   {\rm \ell}_p (x)  +  {\rm \ell}_p (y)  + 10\,.$$  
\item Let $z = (\delta_{ {\rm \ell}_p (z)-1}\cdots \delta_{0})_p \in \ZZ$ and define
$$s_{k+1}(z)= \frac1{2} \bigl(s_{k-1}(z)+ s_{k-2}(z) \bigr) \qquad \qquad (k\ge 2)$$ with
$$ s_{0}(z)=z, \; s_{1}(z)=0 \quad \text{ and } \quad s_{2}(z)= \frac1{2}  z \,.$$
Then there exists some $K\in \NN$ such that  $s_{K}(z)$ is an even integer,   
$$s_{k}(z)= s_{K}(z) \qquad \qquad (k\ge K)$$
and the sum of digits of $z$ is 
$$\sum_{i=0}^{  {\rm \ell}_p (z) -1} \delta_{i}= z- \frac5{2} s_K (z) \,.$$
\end{enumerate}
\end{thm}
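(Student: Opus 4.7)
The plan is to exploit the identity $\alpha^4 = -4$, where $\alpha = -1 + i$ is a root of $p$. Since $X^4 \equiv -4 \pmod p$, every base-$p$ expansion of an integer can be built from its base-$(-4)$ expansion by substituting the $p$-canonical string of each base-$(-4)$ digit into a four-slot block. I would first compute the $p$-representations of the four small digits directly: a short Horner-style division yields $\ell_p(0)=0$, $\ell_p(1)=1$, $\ell_p(2)=4$ with string $(1100)_p$, and $\ell_p(3)=4$ with string $(1101)_p$. These are the atomic data for the entire theorem.

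Item (i) is then essentially a uniqueness argument: given $z = (v_k \cdots v_0)_{-4}$, pasting the $p$-strings for $v_0,\ldots,v_{k-1}$ into consecutive four-slot blocks (padded with leading zeros where necessary) and prepending the possibly shorter string for $v_k$ produces a $p$-canonical representation of length $4(\ell_{-4}(z) - 1) + \lambda_{p,-4}(z)$, which by uniqueness equals $\ell_p(z)$. Item (ii) is immediate from $\lambda_{p,-4}(z) \in \{1,4\}$, the sequence $a(n)$ being the standard listing of integers $\equiv 0, 1 \pmod 4$. For (iii) I would use that positive $n$ forces $\ell_{-4}(n)$ odd while negative $-m$ forces it even, which sends the two sign-classes to disjoint residues $\{1,4\}$ and $\{0,5\}$ modulo $8$. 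Items (iv) and (v) are then a direct inspection of the extremal integers in each base-$(-4)$ length class: the largest positive (resp.\ smallest negative) integer of length $\ell$ has leading base-$(-4)$ digit $3$, and the next integer across the boundary forces the leading digit to become $1$ in length $\ell + 2$, which via (i) translates into the prescribed jump of $+5$ in $\ell_p$.

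For item (vi), the monotonicity of the leading base-$(-4)$ digit (and hence of $\lambda_{p,-4}$) on each block, combined with the overall monotonicity of $\ell_{-4}$ on each sign-class, shows that consecutive distinct values of $\ell_p$ differ by at least $3$. Item (vii) is purely mechanical from $\lambda_{p,-4} \in \{1,4\}$: the four possible values of $\lambda(x) + \lambda(y) - \lambda(xy)$ lie in $\{-2,1,4,7\}$, and I would exhibit one explicit product realising each endpoint. Item (viii) I would prove by feeding the classical sub-additivity bound for $\ell_{-4}$ into (i), the constants $+2$ and $+10$ absorbing the swing of $\lambda_{p,-4}$ between $1$ and $4$ across the three integers involved.

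The main obstacle is item (ix). My plan is to write $z = f(\alpha)$ with $f(X) = \sum \delta_i X^i$ and perform the polynomial division $f(X) = p(X)q(X) + z$ in $\ZZ[X]$; evaluating at $X = 1$ gives $\sum \delta_i = z + 5 q(1)$, so the stated identity is equivalent to $s_K(z) = -2 q(1)$. The characteristic polynomial of the recurrence factors as $2x^3 - x - 1 = (x-1)(2x^2 + 2x + 1)$, and the reciprocal quadratic $2x^2 + 2x + 1 = x^2 p(1/x)$ is precisely the polynomial governing a tail-to-head reduction by $p$. I plan to show by induction on $\ell_p(z)$ that the sequence $s_k$ tracks step by step the partial quotient produced during this division, and to identify the stopping index $K$ (expected to be linear in $\ell_p(z)$) after which the transient contributions of the two complex roots cancel exactly, leaving the terminal even-integer value $-2q(1)$. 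Pinning down both the stabilization and the parity of $s_K$ is the delicate core of the argument; my fallback is to split into cases according to $\ell_p(z) \bmod 8$ and verify each using the explicit block structure furnished by (i).
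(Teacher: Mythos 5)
Your treatment of items (i)--(viii) is essentially the paper's own route: the paper packages your ``four-slot block'' substitution as a general lemma (for any monic $p$ dividing $X^d+c$, giving ${\rm \ell}_p(z)=d({\rm \ell}_{-c}(z)-1)+\lambda_{p,-c}(z)$) and then, exactly as you do, reads everything off from $\lambda_{p,-4}(\ZZ)=\{1,4\}$ together with the standard facts about base $-4$ (parity of length by sign, monotonicity, extremal integers in each length class). Two small points there: the paper's convention gives ${\rm \ell}_p(0)=1$, not $0$; and your item (ii) as phrased only yields the inclusion ${\rm \ell}_p(\ZZ)\subseteq\{a(n)\}$ --- the reverse inclusion needs the extremal-integer analysis you defer to (iv)/(v), which the paper carries out inside its proof of (ii). For (vii) you still owe the two explicit witnesses (the paper uses $20=4\cdot 5$ and $820=2\cdot 410$).

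The genuine problem is item (ix). The paper does not prove it at all --- it cites Grabner--Kirschenhofer--Prodinger --- whereas you attempt a direct argument. Your opening reduction is correct and clean: from $f=pq+z$ and $p(1)=5$ one gets $\sum_i\delta_i=z+5q(1)$, so the claim amounts to $s_K(z)=-2q(1)$. But the core of your plan, an induction showing that $s_k$ ``tracks the partial quotient'' and then \emph{stabilizes} at a finite $K$, cannot succeed for the recursion as you (and the theorem) state it. The general solution of $s_{k+1}=\tfrac12(s_{k-1}+s_{k-2})$ is $s_k=A+Br^k+C\bar r^k$ with $r=\tfrac{-1+i}{2}$, and eventual constancy forces $B=C=0$, i.e.\ $s_0=s_1=s_2$; with the prescribed initial data $(z,0,z/2)$ this fails for every $z\neq 0$ (e.g.\ for $z=2$ one computes $s_3=1$, $s_4=\tfrac12$, $s_5=1$, $s_6=\tfrac34,\dots$, converging to $\tfrac45$ but never constant, and the limit value would make the digit sum $0$ rather than $2$). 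So the sequence converges geometrically but never terminates, and your induction has nothing to land on. To salvage the item you would have to replace this homogeneous recursion by the actual digit-extraction recursion of the cited reference (in which the $\delta_i$ enter as inhomogeneous terms and the iteration genuinely terminates), or simply follow the paper and quote that reference; your fallback of case-splitting on ${\rm \ell}_p(z)\bmod 8$ does not circumvent the non-stabilization.
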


Our result above immediately delivers analogous statements for trinomials of higher degrees. 

\begin{cor}\label{corp2m}
For  $$P= X^{2m} + 2   X^m+  2   \in  \ZZ[X]  \qquad \qquad (m \in \NN)$$
we have
$${\rm \ell}_P (\ZZ)=  \set{m \bigl(a(n) -1\bigr)+  1 \; : \; n\in \NN},$$
where the sequence $a(n)_{n\in \NN}$ is given in Theorem \ref{ppen}.
\end{cor}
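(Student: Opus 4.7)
The approach exploits the identity $P(X) = p(X^m)$, where $p := X^2 + 2X + 2$ is the trinomial of Theorem~\ref{ppen}, together with the coincidence of digit sets: since $|P(0)| = 2 = |p(0)|$, we have $D_P = D_p = \{0,1\}$. Consequently every $p$-canonical representation transfers directly to a $P$-representation via the substitution $X \mapsto X^m$, and the whole proof reduces to verifying that canonicity is preserved under this transfer and then tracking what happens to lengths.

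Concretely, I would fix $z \in \ZZ$ and pick up its $p$-canonical representation
$$z \equiv \sum_{j=0}^{\ell_p(z)-1} u_j X^j \pmod p, \qquad u_j \in D_p,$$
whose existence for every $z \in \ZZ$ is guaranteed by Theorem~\ref{ppen}. Substituting $X \mapsto X^m$ and using $P(X) = p(X^m)$ produces the congruence
$$z \equiv \sum_{j=0}^{\ell_p(z)-1} u_j X^{jm} \pmod P,$$
which already displays a valid $P$-representation of $z$: the digits lie in $D_P$, the highest power occurring is $X^{m(\ell_p(z)-1)}$, and the leading coefficient $u_{\ell_p(z)-1}$ is nonzero by minimality of $\ell_p(z)$ (the degenerate case $z=0$ is handled separately and yields the common length $1$). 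Appealing to the uniqueness of the $P$-canonical representation (recalled in the excerpt), this \emph{is} the $P$-canonical representation of $z$, so
$$\ell_P(z) = m\bigl(\ell_p(z) - 1\bigr) + 1.$$

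Taking images over $z \in \ZZ$ and invoking Theorem~\ref{ppen}(ii), which identifies $\ell_p(\ZZ) = \{a(n) : n \in \NN\}$, then immediately yields
$$\ell_P(\ZZ) = \bigl\{\, m(a(n) - 1) + 1 \; : \; n \in \NN \,\bigr\},$$
as claimed. The only genuine subtlety is the uniqueness step: \emph{a priori} one might fear that the $P$-canonical representation of $z$ could be shorter than the transferred one (with nonzero digits at positions that are not multiples of $m$), but uniqueness rules this out. No further structural information about the sequence $a(n)$ is needed, and no separate analysis of positive versus negative $z$ is required, since the transfer argument is oblivious to sign.
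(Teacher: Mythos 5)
Your proposal is correct and follows essentially the same route as the paper: the paper isolates the substitution $X \mapsto X^m$ and the resulting length formula ${\rm \ell}_P(z) = m({\rm \ell}_p(z)-1)+1$ as a separate general statement (Proposition~\ref{pqrrepr}), whereas you carry out the same transfer argument inline, including the uniqueness step that rules out a shorter $P$-representation. Both then conclude by applying Theorem~\ref{ppen}(ii).
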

\begin{proof}
In view of $$P(X) = p(X^m)$$
with $p$ as in Theorem \ref{ppen}
we have
$${\rm \ell}_P (z)= m ({\rm \ell}_p (z) - 1) + 1 \qquad \qquad (z\in \ZZ)$$
by Proposition \ref{pqrrepr} below, and then our claim drops out from Theorem \ref{ppen}.
\end{proof}

A simple example illustrates this result.

\begin{ex}\label{excorp2m}
Setting   $m=2$ in  Corollary \ref{corp2m}  we obtain
$${\rm \ell}_{X^{4} + 2   X^2+  2} (\ZZ)=  \set{2 a(n) -1 \; : \; n\in \NN},$$
and we have
$${\rm \ell}_{X^{4} + 2   X^2+  2} ( \ZZ) =  \set{b(n)  \;  :  \; n \in \NN_0},$$ where the sequence  $b(n)_{n\in \NN}$  
lists the positive integers  congruent to $1$ or $7$ modulo $ 8$  (see \cite[A047522]{sloeis}) and is given by
$$ b(n)=   \sqrt{8 (c(n+1)  -1)+1}  \qquad \qquad \qquad (n \in \NN_0)$$
with $c(n)_{n\in \NN}$ presented in  \cite[A014494]{sloeis}. Exploiting the remarks on this sequence we can write
$$c (n)  = \frac1{2}\Bigl(  4 n^2 + (-1)^n  (2 n - 1) - 4 n +3 \Bigr) \qquad \qquad (n\in \NN).$$
\end{ex}

Let us now prepare the proof of our theorem by several auxiliary results.

\begin{lem}\label{fsumdc}
Let  $n, d  \in  \NN$, $D$ a subset of a ring,  $0\in D$ and 
$f_0, \ldots, f_n \in D [X]$. If  $f_n \ne 0$ and\footnote{We use the convention $\deg (0) = - \infty$.}
  $$\deg (f_j)  < d  \qquad \qquad (j = 0,\ldots,n)$$
  then we have $$f:=  \sum_{j=0}^n f_j  X^{ j d } \in D [X] \qquad \text{ and } \qquad \deg (f)  = nd  + \deg (f_n)  \,.$$ 
\end{lem}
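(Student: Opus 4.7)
The plan is to exploit the fact that the monomial supports of the summands $f_j X^{jd}$ lie in disjoint blocks of degrees, so that no coefficient cancellation can occur. Concretely, I would first observe that since $\deg(f_j) < d$, every nonzero monomial of $f_j X^{jd}$ has degree in the interval $[jd,\, jd + \deg(f_j)] \subseteq [jd,\, (j+1)d - 1]$. As $j$ ranges over $0,1,\ldots,n$, these intervals are pairwise disjoint subsets of $\NN_0$.

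Because the blocks are disjoint, for each degree $i\in\NN_0$ the coefficient $[X^i]f$ equals either $0$ or the coefficient of $X^{i-jd}$ in the unique $f_j$ with $jd \le i \le (j+1)d-1$. In either case it lies in $D$ (using $0\in D$ to cover the empty slots), so $f\in D[X]$; this handles the first assertion.

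For the degree statement I would argue as follows. Since $f_n\ne 0$ we have $\deg(f_n)\ge 0$, so the monomial of highest degree in $f_n X^{nd}$ sits at $nd+\deg(f_n)\ge nd$, with leading coefficient equal to the leading coefficient of $f_n$. For every $j<n$, the bound $\deg(f_j)<d$ forces every monomial of $f_j X^{jd}$ to have degree $\le (j+1)d-1 \le nd-1 < nd+\deg(f_n)$. Hence no term in $\sum_{j<n} f_j X^{jd}$ can interfere with the leading term contributed by $f_n X^{nd}$, and so $\deg(f)=nd+\deg(f_n)$.

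Essentially no obstacle is expected here; the only point requiring care is the assumption $f_n\ne 0$, which is used precisely to guarantee $\deg(f_n)\ge 0$ so that the leading block $[nd,nd+\deg(f_n)]$ is nonempty and strictly above all previous blocks. If one omitted this hypothesis the asserted equality would fail in the degenerate case $f_n=0$ where cancellation with lower-indexed summands becomes possible.
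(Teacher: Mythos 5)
Your argument is correct: the disjointness of the degree blocks $[jd,(j+1)d-1]$ prevents any cancellation, which gives both the membership $f\in D[X]$ (using $0\in D$) and the degree formula, with $f_n\ne 0$ used exactly where you say. The paper leaves this as "easily checked," and your write-up is precisely the standard verification it has in mind.
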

\begin{proof}
This can easily be checked.
\end{proof}
In the following we denote  by $\Omega_f$ the set of roots of the polynomial $f \in \CC[X]$.
\begin{lem}\label{npgrho}
Let $p \in \ZZ  [X]$ be monic  with $| p(0)| > 1$. 
\begin{enumerate}
\item Let $q\in \ZZ \setminus \{ 0 \}, q p(0) \in Z_p$ and $\ell  = {\ell}_p (z)$. Then 
there exist
 $ v_1, \ldots,  v_{{\ell}  -1} \in D_p$ such that 
$$q p(0) +r = ( v_{{\ell}  -1} \cdots v_1 r)_p \qquad \qquad (r\in D_p).$$
\item  Let $z \in \ZZ$ and assume that all roots of $p$ are simple. If  $g\in D_p [X]$ with 
\begin{equation}\label{eqpdivgrho}
g(\rho) = z \qquad \qquad (\rho \in \Omega_p)
\end{equation}
then $g$ is the $p$-canonical representative of $z$.
\end{enumerate}
\end{lem}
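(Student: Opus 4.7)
For part (i), the plan is to exploit the basic identity that the last digit of the $p$-canonical expansion of any $z \in Z_p$ equals $z \bmod |p(0)|$. This drops out of the defining congruence $z \equiv \sum_{j=0}^{\ell-1} u_j X^j \pmod p$ by evaluating in $\ZZ[X]$ at $X = 0$: the relation gives $z - u_0 = p(0)\,h(0)$ for some $h \in \ZZ[X]$, so $z \equiv u_0 \pmod{p(0)}$, and $u_0 \in D_p$ pins $u_0$ down uniquely. Applied to $z = q\,p(0)$, whose residue modulo $|p(0)|$ is zero, one concludes that the canonical expansion reads $(v_{\ell-1}\cdots v_1\,0)_p$ with $v_{\ell-1}\neq 0$ (the leading digit of a length-$\ell$ canonical expansion is nonzero by the minimality defining $\ell_p$, and $q\neq 0$ guarantees $\ell \geq 1$). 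For any $r \in D_p$ the polynomial $r + \sum_{j=1}^{\ell-1} v_j X^j$ then lies in $D_p[X]$, has leading coefficient $v_{\ell-1}\neq 0$, and is congruent to $q\,p(0) + r$ modulo $p$; uniqueness of the canonical expansion, as cited in the introduction, identifies it as $(v_{\ell-1}\cdots v_1\,r)_p$.

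For part (ii), the approach is to reduce to the same uniqueness principle via polynomial division. By \eqref{eqpdivgrho} the polynomial $g - z \in \ZZ[X]$ vanishes at every $\rho \in \Omega_p$, and since these roots are assumed simple, $p$ divides $g - z$ in $\CC[X]$. Because $p$ is monic with integer coefficients, carrying out the division in $\ZZ[X]$ produces an $h \in \ZZ[X]$ with $g - z = p\,h$, so $z \equiv g \pmod p$ in $\ZZ[X]$. Since the coefficients of $g$ already lie in $D_p$, this displays a $p$-representation of $z$ with digits in $D_p$, and the uniqueness result forces $g$ to coincide with the $p$-canonical representative of $z$.

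I do not anticipate a serious obstacle in either part: the uniqueness theorem quoted in the introduction does the hard work, so both parts collapse to short compatibility checks. The only subtle point worth spelling out in the final write-up is the verification, in (ii), that the degree of $g$ agrees with $\ell_p(z) - 1$ (so the coefficients of $g$ read off the canonical digits without spurious leading zeros); this is automatic, since any competing $D_p[X]$-representation of $z$ of a different degree would contradict uniqueness.
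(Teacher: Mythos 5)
Your proposal is correct, and since the paper disposes of this lemma with ``This is immediately verified,'' your write-up simply supplies the standard details the author had in mind: evaluating the defining congruence at $X=0$ to see that the last digit of $q\,p(0)$ is $0$, and using simple roots plus monic division in $\ZZ[X]$ to get $p \mid g - z$, with uniqueness of the canonical representation finishing both parts. No gaps; the only point worth keeping in the final version is your observation that $q \neq 0$ together with $u_0 = 0$ forces $\ell \geq 2$, so the digits $v_1,\dots,v_{\ell-1}$ actually exist.
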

\begin{proof}
This is immediately verified. 
\end{proof}

Now we can present  our main tool based on the arguments of \cite{penney}.  

\begin{lem}\label{prp}      
Let  $c\in \NN $ and $p$ be a monic  integer  polynomial with only simple roots and    
$$|p(0)|> 1 \quad \text{ and } \quad  q |p(0)|  \in  Z_p  \qquad (q\in \NN, \;  q \le (c - 1) / |p(0)|) \,.$$ 
Further suppose that there is some $d\in \NN$ such that  $p$ divides $X^d + c$ and 
$$d> \deg (p) \qquad \text{ and } \qquad	d \ge   \max \set{{\rm \ell}_p  (i) \;:\; i\in   \{0,1,\ldots, c -1\}}.$$
Then every $z\in \ZZ$ is $p$-canonically representable, its $p$-representative 
can easily be deduced from its  $-c$-representative  and we have
$${\rm \ell}_p  (z) = d \bigl({\rm \ell}_{-c}  (z)-1 \bigr) + \lambda_{p, -c} (z).$$
\end{lem}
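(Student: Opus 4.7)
The plan is to construct explicitly the $p$-canonical representative of $z$ by concatenating the $p$-representatives of the digits of its $-c$-representation, and then to invoke Lemma~\ref{npgrho}(ii).

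First, using the hypothesis $q|p(0)| \in Z_p$ for $q \le (c-1)/|p(0)|$ in combination with Lemma~\ref{npgrho}(i), every $i \in \{0,1,\ldots,c-1\}$ lies in $Z_p$: write $i = q|p(0)| + r$ with $r \in D_p$. In particular $\lambda_{p,-c}$ and the maximum in the hypothesis are well defined. For $z \in \ZZ$ expand
$$z = \sum_{j=0}^k v_j (-c)^j \qquad (v_j \in \{0,\ldots,c-1\}, \; v_k \neq 0),$$
so that $k+1 = {\rm \ell}_{-c}(z)$. For each $j$, let $f_j \in D_p[X]$ be the $p$-canonical representative of $v_j$; the hypothesis $d \ge {\rm \ell}_p(v_j)$ gives $\deg(f_j) < d$, and since $v_k \neq 0$ we have $f_k \neq 0$ with $\deg(f_k) = {\rm \ell}_p(v_k) - 1 = \lambda_{p,-c}(z) - 1$.

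Second, set
$$g(X) = \sum_{j=0}^k f_j(X)\, X^{jd}.$$
By Lemma~\ref{fsumdc}, $g \in D_p[X]$ with $\deg(g) = kd + \deg(f_k) = kd + \lambda_{p,-c}(z) - 1$. Since $p \mid X^d + c$ and $p$ has only simple roots, every root $\rho$ of $p$ satisfies $\rho^d = -c$, so
$$g(\rho) = \sum_{j=0}^k f_j(\rho)\, \rho^{jd} = \sum_{j=0}^k v_j (-c)^j = z.$$
Lemma~\ref{npgrho}(ii) therefore identifies $g$ as the $p$-canonical representative of $z$, whence $z \in Z_p$ and
$${\rm \ell}_p(z) = \deg(g) + 1 = d\bigl({\rm \ell}_{-c}(z) - 1\bigr) + \lambda_{p,-c}(z).$$

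The main obstacle is the verification that the concatenated polynomial $g$ is indeed \emph{the} $p$-canonical representative: a priori, stacking $p$-expansions of $-c$-digits might produce overlapping or non-minimal digit strings. The two features that rule this out are the divisibility $p \mid X^d + c$, which converts the $-c$-value identity for $z$ into the evaluation identity $g(\rho) = z$ at every root $\rho$, and the degree bound $\deg(f_j) < d$ together with Lemma~\ref{fsumdc}, which prevents the blocks of digits from interacting and simultaneously gives the exact degree of $g$. Uniqueness of the $p$-canonical representation then pins down $g$ and delivers the length formula.
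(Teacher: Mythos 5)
Your proof is correct and is essentially identical to the paper's own argument: both first show $\{0,\ldots,c-1\}\subseteq Z_p$ via Lemma~\ref{npgrho}, then concatenate the $p$-representatives of the $-c$-digits into $\sum_j f_j(X)X^{jd}$, control the degree with Lemma~\ref{fsumdc}, evaluate at the roots using $\rho^d=-c$, and conclude with Lemma~\ref{npgrho}(ii) and the uniqueness of the canonical representative. The only cosmetic difference is that the paper treats $z\in D_p$ as a separate trivial case (which also disposes of $z=0$, where $f_k=0$ would make the appeal to Lemma~\ref{fsumdc} vacuous).
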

\begin{proof}
Obviously, we have 
\begin{equation}\label{rhodc}
\rho^d=-c  \qquad \qquad (\rho \in \Omega_p),
\end{equation}
and  since $d$ exceeds $\deg (p) $  we have $$c > |p(0)|\,.$$ 
\comment{Consider f p = X^d +c, thus \deg(f) > 0 and $|f(0)|\ge 1$. The assumption $|f(0)| = 1$ yields some
\sigma \in \Omega_f with |\sigma| \le 1 which contradicts \sigma^d = - c.}By our prerequisites and Lemma \ref{npgrho} we convince ourselves that $$ D:= \{0,1,\ldots, c -1\} \subseteq Z_p\,,$$ thus for each $i\in D$  there exist 
$$u_{d -1}^{(i)}, \ldots, u_0^{(i)} \in D_p $$ (possibly with some  leading $0$'s)  such that 
for
$$h_i :=\sum_{j=0}^{d-1} u_j^{(i)} X^j \in D_p [X]  $$
we have
\begin{equation}\label{hirhoi}
\deg (h_i) < d\;,  \quad  h_i \equiv i \pmod p \qquad \text{ and } \qquad h_i (\rho) =  i \qquad \qquad (\rho \in \Omega_p). 
\end{equation}
For $z \in D_p$ we have $${\rm \ell}_p  (z)  = \lambda_{p,-c} (z)=1$$ and our claim is trivial.  Now let 
$z  \in \ZZ \setminus  D_p, \; \ell :=\ell_{-c} (z)$ and $$z=( v_{\ell-1} \cdots v_0)_{-c} \qquad \qquad 
 (v_0,\ldots,  v_{\ell-1} \in \{0, 1, \ldots, c - 1\}),$$ thus 
 \begin{equation}\label{lzpen} 
v_{\ell-1} \ne 0,  \;  g:= \sum_{i=0}^{\ell-1} v_i X^i \in D [X], \;  \deg (g)= \ell -1,   \;	z \equiv g \pmod {(X+c)}, \;
 \lambda_{p,-c} (z)  = {\rm \ell}_p  (v_{\ell-1} )\,.
\end{equation}
Exploiting  Lemma \ref{fsumdc} we have  
 \begin{equation}\label{gproplzd} 
h_{v_{\ell-1}} \ne 0,\;  \qquad G:= \sum_{i=0}^{\ell-1} h_{v_i} X^{ i d }\in D_p [X] \;  \; \text{ and } \quad \deg (G) = \deg (h_{v_{\ell-1}}) + (\ell-1) d\,.
\end{equation}
Using \eqref{rhodc} and \eqref{hirhoi} we have 
$$G (\rho) = \sum_{i=0}^{\ell-1} h_{v_i} (\rho)  (\rho^d) ^{i }= \sum_{i=0}^{\ell-1} v_i  (- c) ^{i }= g(-c) = z 
\qquad \qquad (\rho \in \Omega_p)\,,$$
hence
$$G \equiv z \pmod p $$
by Lemma \ref{npgrho}. Thus $G$ is the $p$-canonical representative of $z$ and  applying  \eqref{gproplzd} 
and \eqref{lzpen} we find 
\begin{eqnarray*}  
{\rm \ell}_p (z) & = & \deg (G) + 1=\deg (h_{v_{\ell-1}}) + (\ell -1) d + 1=\ell_p( {v_{\ell-1}}) -1 +(\ell -1) d + 1\\
&=& \lambda_{p, -c} (z) + d (\ell -1)   \,.
\end{eqnarray*}
\end{proof}For the sake of completeness we  collect some  well-known facts of the canonical representation of integers with a negative integer base.
\begin{prp}\label{tr-brept} 
Let $b, n\in \NN$ with $b>1$.
\begin{enumerate}\item
$\ell_{-b} (n)$ is odd  and  $\ell_{-b} (-n )$ is even.
\item $\ell_{-b} (\ZZ)=\NN$
\item $\ell_{-b}$ is increasing on $\NN_0$ and  decreasing on $- \NN$. 
\item $\ell_{-b} (n+1)=\ell_{-b} (n) \quad \text{ or } \quad   \ell_{-b} (n+1)=\ell_{-b} (n) + 2 $
\item $\ell_{-b} (- (n+1))=\ell_{-b} (- n) \quad \text{ or } \quad   \ell_{-b} (-(n+1))= \ell_{-b} (- n) + 2 $
\item For $k\in \NN_0$ the largest positive integer of $-b$-length $2k+1$ is 
$$n= ((b-1) 0 \ldots 0 (b-1))_{-b}= \frac{b^{2(k+1)}-1}{b+1} \,,$$ 
and
$$n+1=\underbrace{ (1 (b-1) 0 \cdots 0 (b-1)0) }_{2(k+1)+ 1}= \frac{b}{b+1}(b^{2k+1}+1)$$
is the least positive integer of $-b$-length $2k+3$.
For $k\in \NN$ the least negative integer of $-b$-length $2k$ is 
$$-n = ( (b -1) 0 \ldots  (b -1) 0)_{-b}=- \frac{b}{b+1}(b^{2k}-1) \,, $$
and  
$$-(n+1)=\underbrace{ (1 (b-1) 0 \cdots  (b-1) 0  (b - 1) )}_{2(k+1)}= - \frac{1}{b+1}(b^{2k+1}+1) $$
is the largest negative integer of $-b$-length $2(k+1)$.
\item For  $x,y \in \ZZ$ we have
$${\rm \ell}_{-b}  ( x+y) \le  \max \set{   {\rm \ell}_{-b}  (x) ,  {\rm \ell}_{-b}  (y)} + 1\,,$$
and there exists $e \in \set{-3, -1 , 1}$ such that
$${\rm \ell}_{-b}  ( x y) = {\rm \ell}_{-b}  (x)  + {\rm \ell}_{-b}  (y)  + e\,.$$
\end{enumerate}
\end{prp}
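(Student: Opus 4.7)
The plan is to build the seven items on top of each other, using the well-known existence and uniqueness of the $-b$-representation of every integer (by iterated division with remainder in $\{0,\dots,b-1\}$) as the starting point, and the extremal identification (vi) as the central bookkeeping tool that drives (iii)--(v) and (vii).

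For (i), I would note that if $z=\sum_{j=0}^{\ell-1}u_j(-b)^j$ with $u_{\ell-1}\ne 0$, then the leading term has absolute value $u_{\ell-1}b^{\ell-1}\ge b^{\ell-1}$, while the lower-order digits contribute at most $(b-1)\sum_{j=0}^{\ell-2}b^j=b^{\ell-1}-1$ in absolute value; hence $\mathrm{sgn}(z)=(-1)^{\ell-1}$, so positive $z$ forces $\ell$ odd and negative $z$ forces $\ell$ even. Item (ii) is then immediate since the representations produced by the division algorithm realise lengths of both parities without bound.

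Next I would prove (vi) by a direct extremal argument. For a positive integer of $-b$-length $2k+1$ the positive-contributing positions are the even ones $0,2,\dots,2k$; putting $b-1$ at each of them and $0$ at every odd position yields $(b-1)\sum_{j=0}^{k}b^{2j}=(b^{2(k+1)}-1)/(b+1)$, and the leading digit $b-1$ is nonzero so this is a bona fide canonical representation. The least positive integer of length $2k+3$ is then its successor (the $-b$-representation being recovered by propagating a carry), and the negative cases follow by interchanging the roles of even and odd positions. From (vi) I would deduce (iii)--(v): the positive integers of length at most $2k+1$ exhaust the interval $[1,(b^{2(k+1)}-1)/(b+1)]$ because shorter representations are recovered by padding leading zeros, so $\ell_{-b}$ is monotone on $\NN_0$, and the jump $\ell_{-b}(n+1)-\ell_{-b}(n)\in\{0,2\}$ is forced by the parity constraint in (i); statement (v) is the mirror argument on $-\NN$.

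Finally, for (vii) I would bound magnitudes using (vi) and then re-invoke (vi) in the other direction. Writing $L:=\max\{\ell_{-b}(x),\ell_{-b}(y)\}$, the maxima in (vi) bound $|x|$ and $|y|$ by roughly $b^{L}/(b+1)$, so $|x+y|<2b^L$, and reading (vi) backwards yields $\ell_{-b}(x+y)\le L+1$. For the multiplicative estimate, the identity $\mathrm{sgn}(xy)=\mathrm{sgn}(x)\,\mathrm{sgn}(y)$ combined with (i) forces $e:=\ell_{-b}(xy)-\ell_{-b}(x)-\ell_{-b}(y)$ to be \emph{odd}; the upper bound $e\le 1$ comes from $|xy|\le|x||y|$ together with the extremal maxima in (vi), while the lower bound $e\ge-3$ comes from the complementary lower estimates on the smallest magnitude of a given length. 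I expect this last step to be the main obstacle: the magnitude estimates alone allow $e\in\{-3,-2,-1,0,1\}$, and it is the parity constraint from (i), rather than any sharper magnitude inequality, that rules out the even values and yields the clean trichotomy $e\in\{-3,-1,1\}$.
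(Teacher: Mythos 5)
Your proposal breaks down at the additive inequality in (vii), and the failure is not a repairable detail: the step ``$|x+y|<2b^{L}$, and reading (vi) backwards yields $\ell_{-b}(x+y)\le L+1$'' is invalid, because reading (vi) backwards must respect the parity constraint from (i). If $x+y>0$ and $L$ is odd, there are no positive integers of $-b$-length $L+1$ at all, so the threshold you have to stay under is the largest positive integer of length $L$, namely $(b^{L+1}-1)/(b+1)$, and the sum of two positive integers each of length $L$ can exceed it; the symmetric problem occurs for two negative summands. Indeed the inequality as stated is false: for $b=2$ and $x=y=5=(101)_{-2}$ one has $\ell_{-2}(5)=3$ while $10=(11110)_{-2}$, so $\ell_{-2}(10)=5>4=\max\set{\ell_{-2}(5),\ell_{-2}(5)}+1$; likewise for $b=4$ (the base actually used in the main theorem), $x=y=-12=(30)_{-4}$ gives $-24=(1320)_{-4}$ and $\ell_{-4}(-24)=4>3$. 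The bound your magnitude estimate genuinely supports is $\max\set{\ell_{-b}(x),\ell_{-b}(y)}+2$. You should know that the paper's own proof of this item contains the same error (the inference from $x+y\le(b^{m+2}-1)/(b+1)$ to $\ell_{-b}(x+y)\le m+1$ overlooks that $m+1$ has the wrong parity, and the two-negative-summands case is dismissed with ``similarly''), so this is a defect of the statement itself rather than of your reading of it; note also that the multiplicative claim needs $xy\ne 0$, since $x=0$ gives $e=-\ell_{-b}(y)$.

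The rest of your outline is sound and takes a genuinely different, self-contained route from the paper, which proves (i), (iii) and the product half of (vii) by citation and declares (ii) and (vi) obvious: your sign-of-the-leading-term argument for (i), the digit-extremal identification in (vi), and the derivation of (iii)--(v) from it are all correct. One point deserves more care than you give it: the assertion that the positive integers of length at most $2k+1$ exhaust the interval $[1,(b^{2k+2}-1)/(b+1)]$ does not follow from padding with leading zeros alone; you need that the $b^{2k+1}$ digit strings of length $2k+1$ represent pairwise distinct integers (uniqueness) whose maximum minus minimum equals $b^{2k+1}-1$, so that they fill a full interval of consecutive integers. That exhaustion, not the parity constraint by itself, is also what caps the jumps in (iv)--(v) at $2$ rather than at an arbitrary even number.
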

\begin{proof}
(i) E.g. see \cite[Proposition 3.1]{frolai11}.\\
(ii), (vi)  Obvious.\\
(iii) E.g. see \cite[Lemma 5.5]{hbdsgr}.\\
(iv), (v) Clear by (i)  and (iii).\\
(vi) This can straightforwardly be verified.\\
(vii)  We set  $k:={\rm \ell}  ( x)$ and  $m:={\rm \ell}  ( y)$  and assume $x \le  y$. In case $x>0$ we have $k\le m$ and  
\cite[Proposition 5.3]{hbdsgr} yields 
$$x \le \frac{ b^{k+1} -1  }{b+1} \qquad \text{ and } \qquad y \le \frac{ b^{m+1} -1  }{b+1}\,,$$
thus
$$x +y \le \frac{ b^{m+2} -1  }{b+1} \,,$$
and then
$${\rm \ell}  ( x+y) \le  m + 1\,.$$
Now we consider the case $x<0$. If $x+y\ge 0$ we see
$$0\le x+y < y$$
and  (ii) yields 
$${\rm \ell}  ( x+y) \le  {\rm \ell}  (y)\,,$$
and our claim drops out. Finally, we consider  $x+y <  0$. If $y > 0$ then we have 
$$x < x+y  < 0 $$ and (ii) implies
$${\rm \ell}  ( x+y) \le  {\rm \ell}  (x) \,,$$
and similarly the case $y<0$ is settled. For the second claim see  \cite[Proposition~5.3]{hbdsgr}.
\end{proof}

After these preparations we are now in a position  to  prove our main result. For convenience we write
$$\lambda:= \lambda_{p, -4}.$$
(i) We immediately check  
$$(X^2 - 2X +2) \cdot p= X^4 +4$$ and 
\begin{equation}\label{eqpncng}
0=(0)_p, \;  1=(1)_p, \;  2=(1 1 0 0)_p, \;  3=(1 1 0 1)_p\,,
\end{equation}
thus  
\begin{equation}\label{eqpnn0}
 \lambda (\ZZ) = \{1,4\}.
 \end{equation}
In view of 
$$2\in Z_p \qquad \text{ and } \qquad 4 \ge {\rm \ell}_p (j) \qquad (j=0,\ldots,3),$$ 
an application of Lemma \ref{prp} with $$c=d=4$$ yields our claim.\\
 (ii)  By (i) and the definition of the sequence $(a_n)$ we have 
\begin{equation}\label{eqeqla}
{\rm \ell}_p ( \ZZ) \subseteq \set{a(n)  \; :  \; n \in \NN}.
\end{equation}
To show equality we  convince ourselves that the sequence $(a(n))_{n\in\NN}$ can also be written in the form
$$8 k+1, 8k + 4, 8k +5, 8k +8 ,\ldots \qquad \qquad (k =0, 1, 2, 3, \ldots).$$
Fix $k\in \NN_0$. First, choose $n \in \NN$ minimal with
 \begin{equation}\label{eqpncn+}
 {\rm \ell}_{-4} (n)= 2k+1\,,
\end{equation}
 thus by  Proposition \ref{tr-brept} and \eqref{eqpncng}
 $$\lambda (n)= {\rm \ell}_p ( 1)=1$$
 and then by (i) and \eqref{eqpncn+}
 $${\rm \ell}_p (n ) = 4 (2k+1 -1)+1= 8k+1\,.$$
 Second, choose $n \in \NN$ maximal with \eqref{eqpncn+}, thus analogously as before
 $$\lambda (n)= {\rm \ell}_p ( 3)=4$$
and further  
 $${\rm \ell}_p (n ) = 4 (2k+1 -1)+4= 8k+4\,.$$
Third, we choose $n \in \NN$ minimal with
\begin{equation}\label{eqpncn-}
 {\rm \ell}_{-4} (- n)= 2(k+1)\,,
\end{equation}
thus by Proposition \ref{tr-brept}
$$\lambda (-n)= {\rm \ell}_p (1)=1$$
and then
 $${\rm \ell}_p (-n ) = 4 (2(k+1) -1)+1= 8k+5\,.$$
Fourth,  we choose $n \in \NN$ maximal with \eqref{eqpncn-} and deduce 
 $${\rm \ell}_p (-n ) =  8k+8\,.$$
 Thus, equality in  \eqref{eqeqla} is clear.\\
  (iii) The assumption of equality yields
$$ 4 ({\rm \ell}_{-4} (n)- {\rm \ell}_{-4} (- m)) =  \lambda (-m) - \lambda (n) \in \set{-3,0,3}$$
by (i). But in view of (ii) this is impossible because Proposition \ref{tr-brept} 
   yields
$$ {\rm \ell}_{-4} (n) \ne  {\rm \ell}_{-4} (- m)\,. $$
(iv) Clear by (i) and Proposition \ref{tr-brept}.\\
(v)  Similarly as in the proof of (ii) we immediately verify that
$$a(4n-3) \quad \text{ and } \quad a(4n-2) $$
are the consecutive $p$-lengths of positive integers (with difference $3$), 
and the elements
$$a(4n-1) \quad \text{ and } \qquad a(4n)$$
are the consecutive $p$-lengths of negative integers (also with difference $3$).
Each pair of consecutive $p$-lengths of positive integers is followed by a pair of consecutive $p$-lengths of negative integers. 
\\
(vi) This can immediately be checked by the definition of the sequence  $a(n).$\\ 
(vii) The first claim is trivial. To show possible equality, we may consider   
$$ 4 =  (130)_{-4}, \; 5= (131)_{-4}, \;  20 = (230)_{-4}\,, $$
$$  2 = (2)_{-4},  \;  410 = (22222)_{-4}, \;  820 = (1303030)_{-4}$$
$$\lambda (1) = {\rm \ell}_p (1)=1, \;  \lambda (2) = {\rm \ell}_p (2) = 4\,,$$
hence 
 $$    \lambda (20) = \lambda (4) + \lambda (5) +2   \qquad \text{ and } \qquad \lambda (820) = \lambda (2) + \lambda (410) - 7\,.$$
 (viii) We may assume $${\rm \ell}_{-4} (x)  \le {\rm \ell}_{-4}  (y)\,,$$ and exploiting  (i) and   Proposition~\ref{tr-brept}   we obtain
\begin{eqnarray*} 
{\rm \ell}_p (x+y)  & =   & 4 ({\rm \ell}_{-4}   (x+y) -1) + \lambda (x+y) \le  4 \max \set{   {\rm \ell}_{-4}  (x) ,  {\rm \ell}_{-4}  (y)}  +4\le
4 {\rm \ell}_{-4}  (y) +4  \\
 & =  & 4 ( {\rm \ell}_{-4}  (y)  -1) +\lambda (y) -  \lambda (y)  + 4 = {\rm \ell}_p (y)+  {\rm \ell}_p (x)-4 ( {\rm \ell}_{-4}  (x)  - 1) -  \lambda (x)  -  \lambda (y)  + 4\\
& =  & {\rm \ell}_p (x) + {\rm \ell}_p (y) - 4  {\rm \ell}_{-4}  (x)     -  \lambda (x) -  \lambda (y) +8 \le {\rm \ell}_p (x) + {\rm \ell}_p (y) - 4 -2 +8  \\
& =  &  {\rm \ell}_p (x) + {\rm \ell}_p (y) + 2\,.
\end{eqnarray*}
Analogously we establish the second claim
and we leave the details to the reader. \\
(ix) Clear by  \cite[Proposition 2.2 and Corollary 2.3]{GraKirPro98} with $a=1$. 

\medskip

The  proof of our theorem is now completed, and we finish by rounding off the proof of Corollary~\ref{corp2m}
with  a brief glance at the  representation of integers by non-primitive 
polynomials\footnote{According to 
\cite{jankauskas} we say that a polynomial is primitive if it is not of the form $g(X^k)$ for some $k> 1$.}.

\begin{proposition}\label{pqrrepr}
Let $p  \in \ZZ  [X]$ be monic, $k >1$ and $P:=p(X^k)$.
\begin{enumerate}
\item $D_P= D_p$ 
\item 
If  $g  \in D_p [X]$ canonically represents $z  \in \ZZ$ modulo $p$ then   
$g (X^k) $ canonically represents $z$ modulo $P$. 
In particular, we have $Z_p \subseteq Z_P$,  and
for $z\in Z_p$ we have $${\rm \ell}_P (z)= k ({\rm \ell}_p (z) - 1) + 1\,,$$   
thus 
$${\rm \ell}_P (z) \equiv  1 \pmod k\,.$$
Explicitly,  $z  = (u_{l-1} u_{l-2} \cdots u_0)_p $ implies
$$z = (u_{l-1} w u_{l-2} w\cdots w u_0)_P $$
with
$$w:=\underbrace{0\cdots 0 }_{k - 1} \,.$$
\end{enumerate}
\end{proposition}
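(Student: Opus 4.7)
The plan is a direct substitution argument.

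For part (i), I would simply observe that $P(0) = p(0^k) = p(0)$, so the constant terms have equal absolute value and $D_P = D_p$.

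For part (ii), the key step is the observation that substituting $X \mapsto X^k$ transports congruences modulo $p$ into congruences modulo $P = p(X^k)$. Concretely, if $g \in D_p [X]$ canonically represents $z$ modulo $p$, then monicity of $p$ furnishes $h \in \ZZ [X]$ with $g(X) - z = h(X)\, p(X)$; substituting gives $g(X^k) - z = h(X^k)\, P(X)$, hence $g(X^k) \equiv z \pmod P$. Writing $g = \sum_{j=0}^{\ell -1} u_j X^j$ with $u_j \in D_p$, the polynomial $g(X^k) = \sum_{j=0}^{\ell -1} u_j X^{jk}$ still has all coefficients in $D_p = D_P$, so it is a $P$-canonical representation of $z$, witnessing $z \in Z_P$; by the uniqueness of canonical representations recalled in the introduction, it is \emph{the} canonical one.

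The length formula and explicit digit pattern then follow by reading off $g(X^k)$. Taking $\ell = {\rm \ell}_p (z)$ to be minimal, the leading coefficient $u_{\ell -1} \ne 0$ remains the leading coefficient of $g(X^k)$, sitting at $X^{(\ell -1) k}$; thus $\deg (g(X^k)) = (\ell -1) k$ and ${\rm \ell}_P (z) = k ({\rm \ell}_p (z) - 1) + 1$, which in particular gives ${\rm \ell}_P (z) \equiv 1 \pmod k$. The digits at positions not divisible by $k$ are zero by construction, producing exactly the claimed pattern of $k-1$ zero digits between each pair of consecutive original digits.

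I do not foresee any real obstacle: the entire argument hinges on the single identity $g(X^k) - z = h(X^k)\, P(X)$, after which membership in $D_P$, the stated uniqueness, and a one-line degree count finish the proof.
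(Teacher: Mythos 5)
Your proposal is correct and follows essentially the same route as the paper: both proofs substitute $X \mapsto X^k$ in the identity $g(X) - z = h(X)\,p(X)$ to obtain $g(X^k) - z = h(X^k)\,P(X)$, note that the digits stay in $D_p = D_P$, and read off the length from the degree $(\ell-1)k$ of $g(X^k)$. No gaps.
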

\begin{proof}
(i) Obvious.\\  
(ii)  Let $t \in \ZZ[X]$ and  $n \in \NN_0$ with $$p t=\sum_{i=0}^{n} u_i X^i - z \qquad \qquad (u_0,\ldots,u_n \in  D_p),$$ hence
$$P(X) t(X^k)=p(X^k) t(X^k)=(p t)(X^k) =\sum_{i=0}^{n} u_i X^{k i} - z,$$
and this implies our first claim. Clearly,  we have
$$ {\rm \ell}_p  (z)= n+1\qquad  \text{ and } \qquad  {\rm \ell}_P (z)= k n + 1,$$
and this implies our second assertion.\\
\end{proof}

\begin{rem}
\begin{enumerate}\item
The applicability of our main tool is very restricted. For instance, consider
$$p= X^{2} + 4   X+  8   \in  \ZZ[X]$$
and  
$$ ( X^{2} - 4   X+  8)  \cdot p = X^4 + 64\,.$$
We easily verify\footnote{For instance, use the algorithm given in \cite{kirthu}.} 
$$8 = (1 3 4  0)_p, 16 = (1 2 0 0)_p, 24  =  (2 5 4 0)_p, 32  =   (2 4 0 0)_p, 40  = (3 7 4 0)_p , 48  = (3 6 0 0)_p, 56 = (1 4 7 0 1 4 0)_p\,,$$
thus in view of $${\rm \ell}_p (8q)=4 \quad  (q=1,\ldots,6) \qquad \text{ and } \qquad {\rm \ell}_p (8\cdot 7)=7$$
Lemma \ref{prp}  cannot be applied here.
\item Other aspects of  modified canonical number systems in $\ZZ [i]$  are thoroughly studied in \cite{BogChe13}.
\end{enumerate}
\end{rem}

\bibliographystyle{siam}
\def\cprime{$'$}

\end{document}